\numberwithin{equation}{section}
\theoremstyle{plain}
\newtheorem{proposition}{Proposition}[section]
\newtheorem{theorem}{Theorem}[section]
\newtheorem{lemma}{Lemma}[section]
\newtheorem{corollary}{Corollary}[section]
\theoremstyle{definition}
\newtheorem{definition}{Definition}[section]
\newtheorem{assumption}{Assumption}[section]
\theoremstyle{remark}
\newtheorem{rk}{Remark}[section]
\let\expandafter\oldproof\csname\string\proof\endcsname
\let\oldendproof\endproof
\renewenvironment{proof}[1][\proofname]{%
  \oldproof[\noindent\textbf{#1.} ]%
}{\oldendproof}
\newcommand{\E}{\mathbb{E}}
\newcommand{\be}{\begin{equation}}
\newcommand{\ee}{\end{equation}}
\newcommand{\by}{\begin{eqnarray*}}
\newcommand{\ey}{\end{eqnarray*}}
\def\*#1{\mathbf{#1}}
\renewcommand{\leq}{\leqslant}
\renewcommand{\geq}{\geqslant}
\definecolor{dark-red}{rgb}{0.4,0.15,0.15}
\definecolor{dark-blue}{rgb}{0.15,0.15,0.4}
\definecolor{medium-blue}{rgb}{0,0,0.5}
\begin{document}
\title[Universality of the Langevin diffusion as scaling limit]{Universality of the Langevin diffusion as scaling limit of a family of Metropolis-Hastings processes I: fixed dimension}
\author{Michael C.H. Choi}
\address{Institute for Data and Decision Analytics, The Chinese University of Hong Kong, Shenzhen, Guangdong, 518172, P.R. China.}
\email{michaelchoi@cuhk.edu.cn}
\date{\today}
\maketitle


\begin{abstract}
	Given a target distribution $\mu$ on a general state space $\mathcal{X}$ and a proposal Markov jump process with generator $Q$, the purpose of this paper is to investigate two universal properties enjoyed by two types of Metropolis-Hastings (MH) processes with generators $M_1(Q,\mu)$ and $M_2(Q,\mu)$ respectively. First, we motivate our study of $M_2$ by offering a geometric interpretation of $M_1$, $M_2$ and their convex combinations as $L^1$ minimizers between $Q$ and the set of $\mu$-reversible generators of Markov jump processes. Second, specializing into the case of $\mathcal{X} = \mathbb{R}^d$ along with a Gaussian proposal with vanishing variance and Gibbs target distribution, we prove that, upon appropriate scaling in time, the family of Markov jump processes corresponding to $M_1$, $M_2$ or their convex combinations all converge weakly to an universal Langevin diffusion. While $M_1$ and $M_2$ are seemingly different stochastic dynamics, it is perhaps surprising that they share these two universal properties. These two results are known for $M_1$ in \citet{BD01} and \citet{GM91}, and the counterpart results for $M_2$ and their convex combinations are new.
	\smallskip
	
	\noindent \textbf{AMS 2010 subject classifications}: 60J25, 60J60, 60J75
	
	\noindent \textbf{Keywords}: Markov jump process; Metropolis-Hastings algorithm; Langevin diffusion; scaling limit; optimal scaling
\end{abstract}



\section{Introduction}\label{sec:intro}

The Metropolis-Hastings (MH) algorithm, the Langevin diffusion and their various variants are among the most popular algorithms in the area of Markov chain Monte Carlo (MCMC), see for instance the survey \citet{RR04} and the references therein. Under a Gaussian proposal with vanishing variance and Gibbs target distribution, \citet{GM91} proves that the MH process converges weakly to the Langevin diffusion, thus highlighting the asymptotic connection between this two classes of Markov processes. Another interesting property enjoyed by the MH algorithm, first shown in \citet{BD01}, is that the MH transition kernel minimizes certain $L^1$ distance between the proposal chain and the set of transition kernels that are reversible with respect to the target distribution, thus offering a geometric perspective towards the study of MH algorithm.

With the above classical results in mind, the aim of this paper is to investigate how these two properties can perhaps be extended to an entirely different dynamics that we call the second MH process, introduced recently by the author in \citet{Choi16, ChoiHwang19}. The first universal property is stated in Theorem \ref{thm:geomM1M2} below: both the classical MH and the second MH minimize certain $L^1$ distance, extending the results by \citet{BD01} to a continuous-time and general state space setting. In our main result Theorem \ref{thm:main} below, we state the second universal property: we show that upon the same scaling in time and in space, perhaps surprisingly both the classical MH and the second MH converge to an universal Langevin diffusion. On a microscopic level, both the classical MH and the second MH exhibit different Markovian dynamics, yet however on a macroscopic level or on a large time-scale, both processes and their convex combinations converge to an universal rescaled Langevin diffusion, thus the dynamics of this family are not that different afterall. As emphasized in the title of this paper, we note that the dimension is kept fixed in our weak convergence result. In a related line of work, known as the optimal scaling of MCMC (see for example \citet{RGG97, RR01,B07, JLM14, MPS12, BR17}), the weak convergence results therein are obtained by taking the dimension going to infinity. In the sequel of this paper \citet{Choi19}, we shall investigate the scaling limit of the second MH process in the Curie-Weiss model in the setting of optimal scaling as the dimension increases, in hope of obtaining interesting counterpart results of \citet{BR17}. 

The rest of this paper is organized as follows. In Section \ref{sec:prelim}, we recall the classical and the second MH process and fix our notations. The geometric interpretation of these processes are proved in Section \ref{subsec:geom}. In Section \ref{sec:main}, the weak convergence result is stated, which will be proved in Section \ref{sec:proof}.

\section{Preliminaries}\label{sec:prelim}

\subsection{Metropolis-Hastings generators: $M_1$ and $M_2$}\label{subsec:M1M2}

In this section, we recall the construction of continuous-time Metropolis-Hastings (MH) Markov processes on a general state space $\mathcal{X}$. There are two inputs to the MH algorithm, namely the target distribution and the proposal chain. We refer readers to \citet{RR04} and the references therein for further pointers on this subject. We denote by $\mu$ to be our target distribution and $Q$ to be the generator of the proposal Markov jump process. We assume that both $\mu$ and $Q(x,\cdot)$ are absolutely continuous with respect to a common sigma-finite reference measure $\nu$ on $\mathcal{X}$, and with a slight abuse of notations we still denote their densities by $\mu$ and $Q(x,\cdot)$ respectively. Recall that $Q$ is the generator of a Markov jump process in the sense of \cite[Chapter $4$ Section $2$]{EK86} if and only if 
$$\sup_{x \in \mathcal{X}} \int_{y;~y \neq x} Q(x,y) \nu(dy) < \infty.$$ With these notations, we can now define the first MH generator as a transformation from $Q$ and $\mu$:

\begin{definition}[The first MH generator]\label{def:M1}
Given a target distribution $\mu$ on general state space $\mathcal{X}$ and a proposal continuous-time Markov jump process with generator $Q$, the first MH Markov process is a $\mu$-reversible Markov jump process with generator given by $M_1 = M_1(Q,\mu)$, where for bounded $f$
\begin{align*}
	M_1 f (x) &= \int_{y;~y \neq x} (f(y) - f(x)) M_1(x,y) \nu(dy), \\
	M_1(x,y) &:= \min\left\{Q(x,y),\dfrac{\mu(y)}{\mu(x)}Q(y,x)\right\},\quad x \neq y.
\end{align*}
\end{definition}

Note that 
$$\sup_{x \in \mathcal{X}} \int_{y;~y \neq x} M_1(x,y) \nu(dy) \leq \sup_{x \in \mathcal{X}} \int_{y;~y \neq x} Q(x,y) \nu(dy) < \infty.$$

In view of the earlier work by the author \citet{Choi16, ChoiHwang19}, we would like to study the so-called second MH generator that replaces $\min$ by $\max$ in Definition \ref{def:M1}. More precisely, we define it as follows.

\begin{definition}[The second MH generator]\label{def:M2}
	Given a target distribution $\mu$ on general state space $\mathcal{X}$ and a proposal continuous-time Markov jump process with generator $Q$, define 
	$$M_2(x,y) := \max\left\{Q(x,y),\dfrac{\mu(y)}{\mu(x)}Q(y,x)\right\},\quad x \neq y.$$
	If 
	\begin{align}\label{eq:M2reg}
	\sup_{x \in \mathcal{X}} \int_{y;~y \neq x} M_2(x,y) \nu(dy) < \infty,
	\end{align}
	then the second MH Markov process is a $\mu$-reversible Markov jump process with generator given by $M_2 = M_2(Q,\mu)$, where for bounded $f$
	\begin{align*}
	M_2 f (x) &= \int_{y;~y \neq x} (f(y) - f(x)) M_2(x,y) \nu(dy).
	\end{align*}
\end{definition}


Comparing Definition \ref{def:M1} and \ref{def:M2}, we see that in the former $M_1$ is always a generator of Markov jump process, while in the latter additional conditions on $\mu$ and $Q$ are required so as to ensure \eqref{eq:M2reg}. In our main results Section \ref{sec:main}, we will consider the special case when $Q(x,\cdot)$ is a normal distribution with mean $x$ and variance $\epsilon$, and $\mu$ is the Gibbs distribution. Under the usual regularity conditions on $\mu$ as in \citet{GM91}, we will see that $M_2$ as defined is a valid generator of a Markov jump process, see Proposition \ref{prop:M1M2} below.

\subsection{Geometric interpretation of $M_1$ and $M_2$}\label{subsec:geom}

In order to motivate the definition of $M_1$ and $M_2$ as natural transformations from $Q$ and $\mu$, in this section we offer a geometric interpretation for both $M_1$ and $M_2$, extending the results by \citet{BD01,ChoiHwang19} to a continuous-time and general state space setting. In our result Theorem \ref{thm:geomM1M2} below, we prove that both $M_1$ and $M_2$, as well as their convex combinations, minimize certain $L^1$ distance between $Q$ and the set of $\mu$-reversible  generator of Markov jump processes on $\mathcal{X}$. As a result, in this sense they are natural transformations that maps a given generator $Q$ of Markov jump process to the set of $\mu$-reversible generators of Markov jump process.

We first introduce a few notations and define a metric to quantify the distance between two generators of Markov jump processes. We write $\mathcal{R}(\mu)$ to be the set of conservative $\mu$-reversible generators of Markov jump processes and $\mathcal{S}(\mathcal{X})$ to be the set of generators of Markov jump processes on $\mathcal{X}$. For any $Q_1, Q_2 \in \mathcal{S}(\mathcal{X})$, similar to \cite[Section $4$]{BD01} we define a metric $d_{\mu}$ on $\mathcal{S}(\mathcal{X})$ to be
$$d_{\mu}(Q_1,Q_2) := \int_{\mathcal{X}\times \mathcal{X} \backslash \Delta} \mu(x) |Q_1(x,y)-Q_2(x,y)| \,\nu(dx) \nu(dy),$$
where $\Delta := \{(x,x);~x \in \mathcal{X}\}$ is the set of diagonal in $\mathcal{X} \times \mathcal{X}$. The distance between $Q$ and $\mathcal{R}(\mu)$ is then defined to be
\begin{align}\label{eq:l1metric}
d_{\mu}(Q,\mathcal{R}(\mu)) := \inf_{R \in \mathcal{R}(\mu)} d_{\mu}(Q,R).
\end{align}
With the above notations in mind, we are now ready to state our  result in this section:

\begin{theorem}\label{thm:geomM1M2}
	Suppose that $Q$ and $\mu$ are such that \eqref{eq:M2reg} is satisfied and $M_2$ is a generator of Markov jump process. The convex combinations $\alpha M_1 + (1-\alpha)M_2$ for $\alpha \in [0,1]$ minimize the distance $d_{\mu}$ between $Q$ and $\mathcal{R}(\mu)$. That is,
	$$d_{\mu}(Q,\mathcal{R}(\mu))= d_{\mu}(Q,\alpha M_1 + (1-\alpha)M_2).$$
\end{theorem}


\begin{proof}
	The proof is inspired by the proof of Theorem $1$ in \citet{BD01} and Theorem $3.1$ in \citet{ChoiHwang19}. We first define two helpful half spaces:
	\begin{align*}
		H^{<} = H^{<}(Q,\mu) &:= \big\{(x,y);~\mu(x)Q(x,y) < \mu(y)Q(y,x)\big\}, \\
		H^{>} = H^{>}(Q,\mu) &:= \big\{(x,y);~\mu(x)Q(x,y) > \mu(y)Q(y,x)\big\}.
	\end{align*}
	We now show that for $R \in \mathcal{R}(\mu)$, $d_{\mu}(Q,R) \geq d_{\mu}(Q,M_2)$. First, we note that
	\begin{align*}
	d_{\mu}(Q,R) \geq \int_{(x,y) \in H^{<}} \big[\mu(x) |Q(x,y) - R(x,y)| + \mu(y) |Q(y,x) - R(y,x)|\big]\,\nu(dx)\nu(dy).
	\end{align*}
	As $R$ is $\mu$-reversible, setting $R(x,y) = Q(x,y) + \epsilon_{xy}$ gives $R(y,x) = \frac{\mu(x)}{\mu(y)}(Q(x,y) + \epsilon_{xy})$. Plugging these expressions back yields
	\begin{align*}
		d_{\mu}(Q,N) &\geq \int_{(x,y) \in H^{<}} \Big[\mu(x) |\epsilon_{xy}| + \mu(y) \left|Q(y,x) - \frac{\mu(x)}{\mu(y)}(Q(x,y) + \epsilon_{xy})\right|\Big] \, \nu(dx)\nu(dy) \\
		&=  \int_{(x,y) \in H^{<}} \big[\mu(x) |\epsilon_{xy}| + \left|\mu(y) Q(y,x) - \mu(x)Q(x,y) - \mu(x) \epsilon_{xy}\right|\big] \, \nu(dx)\nu(dy) \\
		&\geq \int_{(x,y) \in H^{<}} \left|\mu(y) Q(y,x) - \mu(x)Q(x,y) \right| \, \nu(dx)\nu(dy) = d_{\mu}(Q,M_2),
	\end{align*}
	where we use the reverse triangle inequality $|a-b| \geq |a| - |b|$ in the second inequality. Similarly, we can show $d_{\mu}(Q,N) \geq d_{\mu}(Q,M_1)$ via substituting $H^{<}$ by $H^{>}$. To see that $d_{\mu}(Q,M_1) = d_{\mu}(Q,M_2)$, we have
	\begin{align*}
	d_{\mu}(Q,M_2) &= \int_{(x,y) \in H^{<}} \left|\mu(y) Q(y,x) - \mu(x)Q(x,y) \right| \, \nu(dx)\nu(dy)\\
	&= \int_{(y,x) \in H^{>}} \left|\mu(y) Q(y,x) - \mu(x)Q(x,y) \right| \, \nu(dx)\nu(dy) = d_{\mu}(Q,M_1).
	\end{align*}
	As for convex combinations of $M_1$ and $M_2$, we see that
	\begin{align*}
	d_{\mu}(Q,\alpha M_1 + (1-\alpha)M_2) &= (1-\alpha)\int_{(x,y) \in H^{<}} \left|\mu(y) Q(y,x) - \mu(x)Q(x,y) \right| \, \nu(dx)\nu(dy) \\
	&\quad + \alpha \int_{(x,y) \in H^{>}} \left|\mu(y) Q(y,x) - \mu(x)Q(x,y) \right| \, \nu(dx)\nu(dy)\\
	&= (1-\alpha) d_{\mu}(Q,M_2) + \alpha d_{\mu}(Q,M_1) = d_{\mu}(Q,M_1).
	\end{align*}
\end{proof}

\section{Main results: universality of Langevin diffusion as scaling limit of random walk $M_1$ and $M_2$}\label{sec:main}

In this section, we specialize into the case of $\mathcal{X} = \mathbb{R}^{d^*}$ with $d^* \in \mathbb{N}$, and we take the reference measure $\nu$ to be the Lebesgue measure. Let $U: \mathbb{R}^{d^*} \to \mathbb{R}$ be a function satisfying the following regularity assumption:
\begin{assumption}\label{assumpt:U}
	$U$ is continuously differentiable, and its gradient $\nabla U$ is bounded and Lipschitz continuous.
\end{assumption}
Note that the same assumption on $U$ is imposed in \citet{GM91} to obtain their weak convergence result that we will briefly recall later in this section. The target distribution $\mu$ is the Gibbs distribution at temperature $T > 0$ with density given by
\begin{align}\label{eq:Gibbsmu}
\mu(\mathbf{x}) = \dfrac{e^{-U(\mathbf{x})/T}}{\int e^{-U(\mathbf{x})/T} \, d \mathbf{x}}, \quad \mathbf{x} \in \mathbb{R}^{d^*}.
\end{align}
Writing $\phi_{\epsilon}$ to be the density of one-dimensional normal distribution with mean $0$ and variance $\epsilon > 0$, for the proposal Markov jump process, we take $Q_{\epsilon}(\mathbf{x},\mathbf{y})$ to be 
\begin{align}\label{eq:GaussianQ}
Q_{\epsilon}(\mathbf{x},\mathbf{y}) = \dfrac{1}{d^*} \sum_{i=1}^{d^*} \phi_{\epsilon}(y_i - x_i) \prod_{j\neq i} \delta(y_j - x_j ),
\end{align}
where $\delta$ is the Dirac delta function. In words, we pick one of the $d^*$ coordinates uniformly at random, say $i$, and propose a new state at $i$ according to a normal distribution centered at $x_i$ and variance $\epsilon$ while keeping other coordinates unchanged. Note that $Q_{\epsilon}(\mathbf{x},\mathbf{y}) = Q_{\epsilon}(\mathbf{y},\mathbf{x})$. If we write $x_+ := \max\{x,0\}$, we define $s_{M_1}$ and $s_{M_2}$ to be respectively
\begin{align*}
	s_{M_1}(\mathbf{x},\mathbf{y}) &:= e^{-(U(\mathbf{y}) - U(\mathbf{x}))_+/T}, \quad s_{M_1}(i,\mathbf{x},y_i) := s_{M_1}((x_1,\ldots,x_{d^*}),(x_1,\ldots,x_{i-1},y_i,x_{i+1},\ldots,x_{d^*})),\\
	s_{M_2}(\mathbf{x},\mathbf{y}) &:= e^{(U(\mathbf{x}) - U(\mathbf{y}))_+/T}, \quad s_{M_2}(i,\mathbf{x},y_i) := s_{M_2}((x_1,\ldots,x_{d^*}),(x_1,\ldots,x_{i-1},y_i,x_{i+1},\ldots,x_{d^*})).
\end{align*}
With the above notations, we can define $M_1$ and $M_2$ in this setting:

\begin{proposition}[$M_1$ and $M_2$ under Gibbs $\mu$ and Gaussian proposal $Q_{\epsilon}$]\label{prop:M1M2}
	Suppose that $U$ satisfies Assumption \ref{assumpt:U}, $\mu$ is the Gibbs distribution \eqref{eq:Gibbsmu} and $Q_{\epsilon}$ is the Gaussian proposal \eqref{eq:GaussianQ}. Then both $M_1^{\epsilon} = M_1(Q_{\epsilon},\mu)$ and $M_2^{\epsilon} = M_2(Q_{\epsilon},\mu)$ are generators of Markov jump process. Furthermore, for $\mathbf{x} \neq \mathbf{y}$,
	\begin{align*}
		M_1^{\epsilon}(\mathbf{x},\mathbf{y}) &= \dfrac{1}{d^*} \sum_{i=1}^{d^*} s_{M_1}(i,\mathbf{x},y_i) \phi_{\epsilon}(y_i - x_i) \prod_{j\neq i} \delta(y_j - x_j ),\\
		M_2^{\epsilon}(\mathbf{x},\mathbf{y}) &= \dfrac{1}{d^*} \sum_{i=1}^{d^*} s_{M_2}(i,\mathbf{x},y_i) \phi_{\epsilon}(y_i - x_i) \prod_{j\neq i} \delta(y_j - x_j ).
	\end{align*} 
	We write $X^{M_1^{\epsilon}} = (X^{M_1^{\epsilon}}(t))_{t \geq 0}$ and $X^{M_2^{\epsilon}} = (X^{M_2^{\epsilon}}(t))_{t \geq 0}$ to be the Markov jump process with generator $M_1^{\epsilon}$ and $M_2^{\epsilon}$ respectively.
\end{proposition}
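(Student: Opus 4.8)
The plan is to first read off the explicit expressions for $M_1^{\epsilon}$ and $M_2^{\epsilon}$ directly from Definitions \ref{def:M1} and \ref{def:M2}, and then to check that both are generators of Markov jump processes, with the integrability condition \eqref{eq:M2reg} for $M_2^{\epsilon}$ being the only substantive point.

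For the formulas, the key observation is that the Gaussian proposal is symmetric, $Q_{\epsilon}(\mathbf{x},\mathbf{y}) = Q_{\epsilon}(\mathbf{y},\mathbf{x})$, so that $\tfrac{\mu(\mathbf{y})}{\mu(\mathbf{x})} Q_{\epsilon}(\mathbf{y},\mathbf{x}) = e^{-(U(\mathbf{y}) - U(\mathbf{x}))/T} Q_{\epsilon}(\mathbf{x},\mathbf{y})$. Factoring out $Q_{\epsilon}(\mathbf{x},\mathbf{y})$ and applying the elementary identities $\min\{1,e^{-a}\} = e^{-a_+}$ and $\max\{1,e^{-a}\} = e^{(-a)_+}$ with $a = (U(\mathbf{y}) - U(\mathbf{x}))/T$ yields $M_1^{\epsilon}(\mathbf{x},\mathbf{y}) = s_{M_1}(\mathbf{x},\mathbf{y}) Q_{\epsilon}(\mathbf{x},\mathbf{y})$ and $M_2^{\epsilon}(\mathbf{x},\mathbf{y}) = s_{M_2}(\mathbf{x},\mathbf{y}) Q_{\epsilon}(\mathbf{x},\mathbf{y})$. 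Since the $i$-th summand of $Q_{\epsilon}$ is supported on the set where $\mathbf{y}$ agrees with $\mathbf{x}$ except in coordinate $i$, on that support one has $s_{M_1}(\mathbf{x},\mathbf{y}) = s_{M_1}(i,\mathbf{x},y_i)$ and likewise for $s_{M_2}$; this lets me push the acceptance factor inside the sum and recover the stated expressions. For $M_1^{\epsilon}$ the generator property is then immediate: since $s_{M_1} \leq 1$ we have $M_1^{\epsilon}(\mathbf{x},\mathbf{y}) \leq Q_{\epsilon}(\mathbf{x},\mathbf{y})$, and combining the remark following Definition \ref{def:M1} with $\int_{\mathbf{y} \neq \mathbf{x}} Q_{\epsilon}(\mathbf{x},\mathbf{y})\,d\mathbf{y} = 1$ bounds the total jump rate by $1$.

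The main obstacle is verifying \eqref{eq:M2reg} for $M_2^{\epsilon}$, since now $s_{M_2} \geq 1$ and may be large. Writing $\mathbf{e}_i$ for the $i$-th standard basis vector and integrating out the $d^*-1$ delta coordinates in each summand, the change of variable $z = y_i - x_i$ gives the total rate
$$\int_{\mathbf{y} \neq \mathbf{x}} M_2^{\epsilon}(\mathbf{x},\mathbf{y})\,d\mathbf{y} = \frac{1}{d^*}\sum_{i=1}^{d^*}\int_{\mathbb{R}} e^{(U(\mathbf{x}) - U(\mathbf{x} + z\mathbf{e}_i))_+/T}\,\phi_{\epsilon}(z)\,dz.$$
By Assumption \ref{assumpt:U} the gradient $\nabla U$ is bounded, say by a constant $C$, so the fundamental theorem of calculus along the segment joining $\mathbf{x}$ and $\mathbf{x} + z\mathbf{e}_i$ gives $(U(\mathbf{x}) - U(\mathbf{x} + z\mathbf{e}_i))_+ \leq C|z|$ uniformly in $\mathbf{x}$ and $i$. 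The light Gaussian tails then absorb this exponential growth: using $e^{C|z|/T} \leq e^{Cz/T} + e^{-Cz/T}$ together with the Gaussian moment generating function $\int_{\mathbb{R}} e^{\lambda z}\phi_{\epsilon}(z)\,dz = e^{\lambda^2 \epsilon/2}$, each inner integral is at most $2\,e^{C^2\epsilon/(2T^2)}$.

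Consequently the total rate is bounded by $2\,e^{C^2\epsilon/(2T^2)} < \infty$ uniformly in $\mathbf{x}$, which establishes \eqref{eq:M2reg} and hence that $M_2^{\epsilon}$ is a bona fide generator of a Markov jump process. The only place where Assumption \ref{assumpt:U} is genuinely used is this last estimate, where the boundedness of $\nabla U$ is precisely what keeps the $\max$-type acceptance factor integrable against the Gaussian proposal; I expect this uniform-in-$\mathbf{x}$ bound to be the crux of the argument, while the derivation of the explicit formulas and the $M_1^{\epsilon}$ case are routine.
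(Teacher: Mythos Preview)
Your proposal is correct and follows essentially the same approach as the paper's own proof: the symmetry of $Q_{\epsilon}$ is used to factor out the acceptance ratio and obtain the explicit formulae, and for the $M_2^{\epsilon}$ integrability condition the boundedness of $\nabla U$ together with the mean value theorem gives $s_{M_2}(i,\mathbf{x},y_i) \leq e^{M|y_i - x_i|/T}$, after which one bounds the total rate by a finite Gaussian exponential moment independent of $\mathbf{x}$. The only cosmetic difference is that the paper simply appeals to the moment generating function of the half-normal $|Z|$, whereas you make the bound explicit via $e^{C|z|/T} \leq e^{Cz/T} + e^{-Cz/T}$ and the Gaussian MGF.
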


\begin{proof}
	We first prove the two formulae for $M_1^{\epsilon}(\mathbf{x},\mathbf{y})$ and $M_2^{\epsilon}(\mathbf{x},\mathbf{y})$. As $Q_{\epsilon}(\mathbf{x},\mathbf{y}) = Q_{\epsilon}(\mathbf{y},\mathbf{x})$, we have 
	\begin{align*}
		M_1^{\epsilon}(\mathbf{x},\mathbf{y}) &= Q_{\epsilon}(\mathbf{x},\mathbf{y}) \min\bigg\{\dfrac{\mu(\mathbf{y})}{\mu(\mathbf{x})},1\bigg\} = Q_{\epsilon}(\mathbf{x},\mathbf{y}) s_{M_1}(\mathbf{x},\mathbf{y}) = \dfrac{1}{d^*} \sum_{i=1}^{d^*} s_{M_1}(i,\mathbf{x},y_i) \phi_{\epsilon}(y_i - x_i) \prod_{j\neq i} \delta(y_j - x_j ), \\
		M_2^{\epsilon}(\mathbf{x},\mathbf{y}) &= Q_{\epsilon}(\mathbf{x},\mathbf{y}) \max\bigg\{\dfrac{\mu(\mathbf{y})}{\mu(\mathbf{x})},1\bigg\} = Q_{\epsilon}(\mathbf{x},\mathbf{y}) s_{M_2}(\mathbf{x},\mathbf{y}) = \dfrac{1}{d^*} \sum_{i=1}^{d^*} s_{M_2}(i,\mathbf{x},y_i) \phi_{\epsilon}(y_i - x_i) \prod_{j\neq i} \delta(y_j - x_j ).
	\end{align*}
	Next, we show that $M_2^{\epsilon}$ is a valid generator of Markov jump process. Let $M$ be an upper bound on 
	$$\sup_{\mathbf{x} \in \mathbb{R}^{d^*}} |\partial_{x_i} U (x)| \leq M. $$ for all $1 \leq i \leq d^*$. By mean value theorem on $U$, we have
	$$s_{M_2}(i,\mathbf{x},y_i) \leq e^{M|y_i - x_i|/T}.$$
	By writing $Z$ to be a normal random variable with mean $0$ and variance $\epsilon$, this leads to 
	$$\int_{\mathbf{y};~\mathbf{x} \neq \mathbf{y}}M_2^{\epsilon}(\mathbf{x},\mathbf{y})\, d\mathbf{y} \leq \mathbb{E}(e^{M|Z|/T}) < \infty,$$
	where $\mathbb{E}(e^{M|Z|/T})$ is the moment generating function of the half-normal distribution $|Z|$, which is independent of $\mathbf{x}$.
\end{proof}

In our main result of this paper, we are primarily interested in the scaling limit of $X^{M_1^{\epsilon}}$ and $X^{M_2^{\epsilon}}$ upon scaling in time as $\epsilon \to 0$. The scaling in space is embedded in the proposal $Q_{\epsilon}$. Let $D^{d^*}[0,\infty)$ be the space of $\mathbb{R}^{d^*}$-valued functions on $[0,\infty)$ that are right continuous with left limit, equipped with the Skorohod topology. We denote the weak convergence of processes in the Skorohod topology by $\Rightarrow$.

\begin{theorem}[Universality of the Langevin diffusion as scaling limit of $M_1^{\epsilon}$ and $M_2^{\epsilon}$]\label{thm:main}
	Suppose that $U$ satisfies Assumption \ref{assumpt:U}, and we let $X^{M_1^{\epsilon}} = (X^{M_1^{\epsilon}}(t))_{t \geq 0}$ and $X^{M_2^{\epsilon}} = (X^{M_2^{\epsilon}}(t))_{t \geq 0}$ to be the Markov jump process with generator $M_1^{\epsilon}$ and $M_2^{\epsilon}$ respectively, both with initial distribution $X^{M_1^{\epsilon}}(t) = X^{M_2^{\epsilon}}(t) = X_0$ independent of $\epsilon$. Let $X = (X(t))_{t \geq 0}$ be the following time-rescaled Langevin diffusion with $X(0) = X_0$ and stochastic differential equation given by
	\begin{align}\label{eq:langevinsde}
		dX(t) = - \dfrac{\nabla U(X(t))}{2Td^*} \, dt + \dfrac{1}{\sqrt{d^*}} \,d W(t),
	\end{align}
	where $(W(t))_{t \geq 0}$ is the standard $d^*$-dimensional Brownian motion. Then we have
	$$X^{M_1^{\epsilon}}\left(\dfrac{\cdot}{\epsilon}\right) \Rightarrow X(\cdot), \quad X^{M_2^{\epsilon}}\left(\dfrac{\cdot}{\epsilon}\right) \Rightarrow X(\cdot)$$
	weakly in $D^{d^*}[0,\infty)$ as $\epsilon \to 0$.
\end{theorem}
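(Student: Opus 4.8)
The plan is to prove weak convergence by verifying convergence of the infinitesimal generators on a suitable core, and then invoking a standard martingale-problem / generator-convergence theorem (for instance \cite[Chapter 4, Theorem 8.2 and Corollary 8.5]{EK86}) together with tightness. Concretely, let $A^{M_1^{\epsilon}} := \epsilon^{-1} M_1^{\epsilon}$ and $A^{M_2^{\epsilon}} := \epsilon^{-1} M_2^{\epsilon}$ denote the time-rescaled generators corresponding to $X^{M_1^{\epsilon}}(\cdot/\epsilon)$ and $X^{M_2^{\epsilon}}(\cdot/\epsilon)$, and let $\mathcal{L}$ be the generator of the limiting Langevin diffusion \eqref{eq:langevinsde}, namely
\begin{align*}
\mathcal{L} f(\mathbf{x}) = -\frac{1}{2Td^*} \nabla U(\mathbf{x}) \cdot \nabla f(\mathbf{x}) + \frac{1}{2d^*} \Delta f(\mathbf{x}).
\end{align*}
I would take the core to be $C_c^{\infty}(\mathbb{R}^{d^*})$ (or the smooth functions with compact support, which is a core for $\mathcal{L}$ under Assumption \ref{assumpt:U}), and show that for such $f$,
\begin{align*}
\sup_{\mathbf{x}} \left| \frac{1}{\epsilon} M_k^{\epsilon} f(\mathbf{x}) - \mathcal{L} f(\mathbf{x}) \right| \to 0 \quad \text{as } \epsilon \to 0, \qquad k = 1,2,
\end{align*}
and likewise for their convex combinations.

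The heart of the computation is a coordinatewise Taylor expansion. Since $Q_{\epsilon}$ updates a single randomly chosen coordinate, for fixed $f$ one has
\begin{align*}
M_k^{\epsilon} f(\mathbf{x}) = \frac{1}{d^*} \sum_{i=1}^{d^*} \int_{\mathbb{R}} \big(f(\mathbf{x} + (y_i - x_i)e_i) - f(\mathbf{x})\big)\, s_{M_k}(i,\mathbf{x},y_i)\, \phi_{\epsilon}(y_i - x_i)\, dy_i,
\end{align*}
where $e_i$ is the $i$-th standard basis vector. Substituting $z = (y_i - x_i)/\sqrt{\epsilon}$ so that $z$ is standard normal, I would Taylor-expand $f(\mathbf{x} + \sqrt{\epsilon}\, z\, e_i)$ to second order and the score factor $s_{M_k}(i,\mathbf{x}, x_i + \sqrt{\epsilon}\, z)$ to first order in $\sqrt{\epsilon}$. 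For $M_1$ one uses $e^{-(\partial_{x_i} U(\mathbf{x})\sqrt{\epsilon}z)_+/T} \approx 1 - \tfrac{\sqrt{\epsilon}}{T}(\partial_{x_i}U(\mathbf{x})\, z)_+$, and for $M_2$ the analogous expansion with $(\cdot)_+$ replaced by the other positive part; dividing by $\epsilon$, the $O(\epsilon^{-1/2})$ terms integrate against the standard normal and, using the elementary identities $\E[z\,(az)_+] = a/2$ and $\E[z\,(-az)_+] = -a/2$ for the drift contribution together with $\E[z^2] = 1$ for the diffusion contribution, both $M_1$ and $M_2$ produce the \emph{same} drift $-\tfrac{1}{2Td^*}\partial_{x_i}U(\mathbf{x})$ and the same diffusion $\tfrac{1}{2d^*}\partial_{x_i}^2 f$. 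Summing over $i$ recovers $\mathcal{L}f$ exactly. This cancellation, that the $\min$ and the $\max$ route through opposite signs of the positive-part function but yield an identical first-order drift, is precisely the mechanism behind the claimed universality, and I would isolate this one-dimensional moment identity as the key lemma.

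The main obstacle is controlling the remainder terms uniformly in $\mathbf{x}$ so as to upgrade pointwise Taylor expansion to convergence in the supremum norm. The danger is that the score factor $s_{M_2}(i,\mathbf{x},y_i)$ grows exponentially in $|y_i - x_i|$; however, by Assumption \ref{assumpt:U} the gradient bound $|\partial_{x_i}U| \le M$ gives $s_{M_2}(i,\mathbf{x},y_i) \le e^{M|y_i - x_i|/T}$ uniformly in $\mathbf{x}$ (as already used in Proposition \ref{prop:M1M2}), so the Gaussian tail of $\phi_{\epsilon}$ dominates and all the relevant moments, $\E[e^{M\sqrt{\epsilon}|z|/T}\,|z|^k]$, stay bounded as $\epsilon \to 0$. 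I would combine this with the Lipschitz continuity of $\nabla U$ and the boundedness of the derivatives of $f \in C_c^{\infty}$ to bound the Taylor remainder by $\epsilon^{1/2}$ times an $\mathbf{x}$-uniform constant, so that after dividing by $\epsilon$ the error is $O(\sqrt{\epsilon})$ uniformly. The one subtlety is that $x \mapsto (x)_+$ is only Lipschitz and not differentiable at $0$, so the first-order expansion of the score must be carried out with an explicit Lipschitz-type error rather than a Taylor derivative; this contributes an $O(\epsilon)$ correction to $M_k^{\epsilon} f$ that vanishes after rescaling.

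Once uniform generator convergence on the core is established, the remaining steps are routine. I would verify that the limiting martingale problem for $\mathcal{L}$ is well-posed under Assumption \ref{assumpt:U} (existence and uniqueness of the Langevin diffusion \eqref{eq:langevinsde} follows from the bounded, Lipschitz drift), establish tightness of the laws of $X^{M_k^{\epsilon}}(\cdot/\epsilon)$ in $D^{d^*}[0,\infty)$ (which follows from the generator bounds and the standard Aldous--Rebolledo criterion, or directly from \cite[Chapter 4, Theorem 8.2]{EK86} whose hypotheses are exactly the uniform generator convergence plus a compact-containment condition), and then conclude $X^{M_k^{\epsilon}}(\cdot/\epsilon) \Rightarrow X(\cdot)$ for $k=1,2$. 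The convex-combination case $\alpha M_1^{\epsilon} + (1-\alpha)M_2^{\epsilon}$ is immediate since generator convergence is linear: the rescaled generator converges to $\alpha \mathcal{L} + (1-\alpha)\mathcal{L} = \mathcal{L}$, giving the same limit.
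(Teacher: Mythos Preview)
Your proposal is correct and follows essentially the same route as the paper: reduce to uniform convergence of $\epsilon^{-1}M_k^\epsilon f$ to the Langevin generator on the core $C_c^\infty(\mathbb{R}^{d^*})$, linearise the score $s_{M_k}$ using the Lipschitz continuity of $\nabla U$ together with the uniform bound $s_{M_2}\le e^{M|y_i-x_i|/T}$, control the remainders by Gaussian moment estimates, and then invoke a generator-convergence theorem from \cite{EK86}. The only difference is organisational: the paper first isolates the linearised score $\hat s_{M_2}$ (Lemma~\ref{lem:sshat}), records folded-normal moment bounds (Lemma~\ref{lem:foldednormal}), and computes the first three jump moments against $M_2^\epsilon$ by completing the square (Lemma~\ref{lem:estimatedrift}) before Taylor-expanding $f$, whereas you expand $f$ and $s_{M_k}$ simultaneously and extract the drift from the identity $\E[Z\,(aZ)_+]=a/2$ for standard normal $Z$, which has the virtue of making the coincidence of the $M_1$ and $M_2$ limits transparent in a single line.
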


\begin{rk}\label{rk:GM91}
	As noted in the abstract and in Section \ref{sec:intro}, the weak convergence of $X^{M_1^{\epsilon}}$ to the Langevin diffusion is first proved by \citet{GM91}. We shall only prove the case of $X^{M_2^{\epsilon}}$ in Section \ref{subsec:proofmain}.
\end{rk}

\begin{rk}
	Denote by $(\widetilde{X}(t))_{t \geq 0}$ the Langevin diffusion with initial condition $\widetilde{X}(0) = X_0$ and stochastic differential equation 
	$$d\widetilde{X}(t) = - \nabla U(\widetilde{X}(t)) \, dt + \sqrt{2T} \,d W(t).$$
	Denote the clock process by $\tau(t) := t/(2Td^*)$, then $\widetilde{X}(\tau(\cdot))$ has the same law as $X(\cdot)$ satisfying \eqref{eq:langevinsde}. In other words, \eqref{eq:langevinsde} is the Langevin diffusion running at time scale $\tau(\cdot)$.
\end{rk}

It is perhaps surprising that both $M_1^{\epsilon}$ and $M_2^{\epsilon}$ share the same scaling limit, given that they have entirely different dynamics. In fact, any Markov jump process whose generator is a convex combination of $M_1^{\epsilon}$ and $M_2^{\epsilon}$ converges to the same Langevin diffusion:

\begin{corollary}\label{cor:main}
	Suppose that $U$ satisfies Assumption \ref{assumpt:U}, and we let $Y^{\epsilon} = (Y^{\epsilon}(t))_{t \geq 0}$ be the Markov jump process with generator $\alpha M_1^{\epsilon} + (1-\alpha)M_2^{\epsilon}$, $\alpha \in [0,1]$ and initial distribution $Y^{\epsilon}(t) = X_0$ independent of $\epsilon$. Let $X = (X(t))_{t \geq 0}$ be the following time-rescaled Langevin diffusion with $X(0) = X_0$ and stochastic differential equation given by
	$$dX(t) = - \dfrac{\nabla U(X(t))}{2Td^*} \, dt + \dfrac{1}{\sqrt{d^*}} \,d W(t),$$
	where $(W(t))_{t \geq 0}$ is the standard $d^*$-dimensional Brownian motion. Then we have
	$$Y^{\epsilon}\left(\dfrac{\cdot}{\epsilon}\right) \Rightarrow X(\cdot)$$
	weakly in $D^{d^*}[0,\infty)$ as $\epsilon \to 0$.
\end{corollary}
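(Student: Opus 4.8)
The plan is to prove Corollary \ref{cor:main} by the standard generator-convergence approach for weak convergence of Markov processes, exactly as one would for Theorem \ref{thm:main} itself. By Theorem \ref{thm:main} (and the method announced in Remark \ref{rk:GM91} for $M_2^{\epsilon}$), the key analytic fact that is already established is that the rescaled generators $\epsilon^{-1} M_1^{\epsilon}$ and $\epsilon^{-1} M_2^{\epsilon}$, applied to a suitable core of smooth compactly supported test functions $f \in C_c^{\infty}(\mathbb{R}^{d^*})$, converge uniformly to the Langevin generator
\begin{align*}
	\mathcal{L} f(\mathbf{x}) = - \frac{\nabla U(\mathbf{x})}{2Td^*} \cdot \nabla f(\mathbf{x}) + \frac{1}{2d^*} \Delta f(\mathbf{x}).
\end{align*}
The generator of $Y^{\epsilon}$ is by definition the convex combination $\alpha M_1^{\epsilon} + (1-\alpha) M_2^{\epsilon}$, so after the same time-rescaling by $1/\epsilon$ its generator is $\alpha\, \epsilon^{-1} M_1^{\epsilon} + (1-\alpha)\, \epsilon^{-1} M_2^{\epsilon}$. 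Since convergence of generators on the core is preserved under finite linear combinations, I would simply write
\begin{align*}
	\left\| \left( \alpha\, \tfrac{1}{\epsilon} M_1^{\epsilon} + (1-\alpha)\, \tfrac{1}{\epsilon} M_2^{\epsilon} \right) f - \mathcal{L} f \right\|_{\infty} \le \alpha \left\| \tfrac{1}{\epsilon} M_1^{\epsilon} f - \mathcal{L} f \right\|_{\infty} + (1-\alpha) \left\| \tfrac{1}{\epsilon} M_2^{\epsilon} f - \mathcal{L} f \right\|_{\infty},
\end{align*}
and both terms on the right vanish as $\epsilon \to 0$ by Theorem \ref{thm:main}, so the convex-combination generator converges to the same limit $\mathcal{L} f$ uniformly on the core.

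The remaining step is to invoke the standard weak-convergence machinery. The plan is to apply the operator-convergence theorem for Markov processes (Ethier--Kurtz \cite[Chapter 4, Theorem 8.2 or Corollary 8.7]{EK86}): uniform convergence of generators on a core that is a determining class, together with well-posedness of the martingale problem for the limiting Langevin generator $\mathcal{L}$ (guaranteed by Assumption \ref{assumpt:U}, since $\nabla U$ bounded and Lipschitz yields a unique nonexploding strong solution to \eqref{eq:langevinsde}), and the common initial condition $Y^{\epsilon}(0) = X_0 = X(0)$, implies $Y^{\epsilon}(\cdot/\epsilon) \Rightarrow X(\cdot)$ in $D^{d^*}[0,\infty)$. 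The tightness needed to apply this theorem follows from the generator estimates in the same way as in the proofs of Theorem \ref{thm:main}; I would note that the generator bound $\sup_{\mathbf{x}} \int_{\mathbf{y} \neq \mathbf{x}} (\alpha M_1^{\epsilon} + (1-\alpha) M_2^{\epsilon})(\mathbf{x},\mathbf{y})\, d\mathbf{y} \le \mathbb{E}(e^{M|Z|/T})$ holds uniformly, inherited from the $M_2^{\epsilon}$ bound in Proposition \ref{prop:M1M2} together with the trivial bound for $M_1^{\epsilon}$.

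I expect no genuine obstacle here: the corollary is essentially immediate once Theorem \ref{thm:main} is in hand, because linearity of the generator in the convex-combination parameter $\alpha$ and linearity of the convergence estimate do all the work. The only point requiring mild care is to confirm that the auxiliary estimates used in the proof of Theorem \ref{thm:main} (the uniform control of the rescaled generators and the associated remainder terms) are stated in a form that is uniform in the test function over the chosen core, so that the triangle-inequality bound above is legitimate; since those estimates are used identically for $M_1^{\epsilon}$ and $M_2^{\epsilon}$ separately, the convex combination inherits them verbatim. Thus the main work is bookkeeping rather than new analysis.
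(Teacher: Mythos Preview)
Your proposal is correct and matches the paper's own proof essentially verbatim: the paper also reduces the corollary to the triangle-inequality bound
\[
\sup_{\mathbf{x}} \left| \tfrac{1}{\epsilon}(\alpha M_1^{\epsilon} + (1-\alpha)M_2^{\epsilon})f(\mathbf{x}) - Gf(\mathbf{x}) \right| \le \alpha \sup_{\mathbf{x}} \left| \tfrac{1}{\epsilon} M_1^{\epsilon} f(\mathbf{x}) - Gf(\mathbf{x}) \right| + (1-\alpha) \sup_{\mathbf{x}} \left| \tfrac{1}{\epsilon} M_2^{\epsilon} f(\mathbf{x}) - Gf(\mathbf{x}) \right| \to 0,
\]
and then invokes the same generator-convergence machinery used for Theorem~\ref{thm:main}. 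The only cosmetic difference is that the paper cites \cite[Chapter~1, Theorem~6.1]{EK86} for the weak-convergence step rather than Chapter~4, Theorem~8.2/Corollary~8.7, but the argument is identical.
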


In view of Theorem \ref{thm:geomM1M2} and Corollary \ref{cor:main}, we see that on one hand the convex combination of $\alpha M_1^{\epsilon} + (1-\alpha)M_2^{\epsilon}$ may have different dynamics for different $\alpha$, yet interestingly they all minimize the distance $d_{\mu}$ and converge weakly to the same Langevin diffusion.

The rest of the paper is devoted to the proof of Theorem \ref{thm:main} and Corollary \ref{cor:main} in Section \ref{subsec:proofmain} and \ref{subsec:proofcor} respectively.

\section{Proofs of main results}\label{sec:proof}

\subsection{Proof of Theorem \ref{thm:main}}\label{subsec:proofmain}

For notational convenience, we replace $U(\cdot)$ by $U(\cdot)/T$. In view of Remark \ref{rk:GM91}, we only prove the weak convergence of $X^{M_2^{\epsilon}}$. We let $G$ to be the generator of $X$ described by \eqref{eq:langevinsde}, where for $f \in C^2(\mathbb{R}^{d^*})$,
\begin{align*}
	Gf(\*x) = \dfrac{1}{d^*} \sum_{i=1}^{d^*} \partial_{x_i} U(\*x)\partial_{x_i} f(\mathbf{x}) + \dfrac{1}{2d^*} \sum_{i=1}^{d^*} \partial^2_{x_i}f(\*x).
\end{align*}
Note that since the drift $\nabla U$ is Lipschitz continuous, by \cite[Chapter $8$ Theorem $2.5$]{EK86}, the space of infinitely differentiable functions with compact support $C^{\infty}_c(\mathbb{R}^{d^*})$ forms a core of $G$. Thus to prove the desired weak convergence, by \cite[Chapter $1$ Theorem $6.1$]{EK86} it suffices to prove the uniform convergence of the generator in $C^{\infty}_c(\mathbb{R}^{d^*})$, that is, for $f \in C^{\infty}_c(\mathbb{R}^{d^*})$ as $\epsilon \to 0$ we would like to prove that
\begin{align}\label{eq:uniformconv}
	\sup_{\*x \in \mathbb{R}^{d^*}} |(1/\epsilon)M_2^{\epsilon}f(\*x) - Gf(\*x)| \to 0.
\end{align}

Define for $\*x,\*y \in \mathbb{R}^{d^*}$, $\langle \*x,\*y \rangle := \sum_{i=1}^{d^*} x_iy_i, \quad
\| \*x \|^2 := \langle \*x,\*x \rangle,$
\begin{align*}
	\hat{s}_{M_2}(\*x,\*y) &:= e^{\langle \nabla U(\*x),\*x-\*y\rangle_+}, \quad 
	\hat{s}_{M_2}(i,\mathbf{x},y_i) := \hat{s}_{M_2}((x_1,\ldots,x_{d^*}),(x_1,\ldots,x_{i-1},y_i,x_{i+1},\ldots,x_{d^*})), \\
	g(\*x,\*y) &:= U(\*x) - U(\*y) - \langle \nabla U(\*x),\*x-\*y\rangle.
\end{align*}
We now present three lemmas that will aid our proof, and their proofs are deferred to Section \ref{subsubsec:lem1}, \ref{subsubsec:lem2} and \ref{subsubsec:lem3} respectively. The first auxiliary lemma bounds the distance between $s_{M_2}$ and $\hat{s}_{M_2}$:

\begin{lemma}\label{lem:sshat}
	There exists positive constants $M$ and $c_1$ that only depend on $U$ and $T$ such that
	$$|s_{M_2}(\*x,\*y) - \hat{s}_{M_2}(\*x,\*y)| \leq c_1 e^{M \sum_{i=1}^{d^*}|y_i - x_i|} \|\*y - \*x\|^2.$$
	Consequently, we have
	$$|s_{M_2}(i,\mathbf{x},y_i) - \hat{s}_{M_2}(i,\mathbf{x},y_i)| \leq c_1 e^{M|y_i - x_i|}|y_i-x_i|^2.$$
\end{lemma}

Our next lemma controls the upper bound on Lemma \ref{lem:sshat}.
\begin{lemma}\label{lem:foldednormal}
	Recall that $Z$ follows a normal distribution with mean $0$, variance $\epsilon$ and probability density function $\phi_{\epsilon}$. Then for $t \in \mathbb{R}$ and $\epsilon \to 0$ we have
	\begin{align*}
		\E(e^{t|Z|}|Z|^3) &= \mathcal{O}(\epsilon^{3/2}), \\
		\E(e^{t|Z|}|Z|^4) &= \mathcal{O}(\epsilon^{2}).
	\end{align*}
	
\end{lemma}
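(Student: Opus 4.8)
The plan is to prove the two moment asymptotics by a direct computation, exploiting the scaling structure of the centered Gaussian. The key observation is that if $Z$ is normal with mean $0$ and variance $\epsilon$, then $Z \overset{d}{=} \sqrt{\epsilon}\,Z_1$ where $Z_1$ is a standard normal, so that $|Z|^k \overset{d}{=} \epsilon^{k/2}|Z_1|^k$. This already extracts the claimed powers $\epsilon^{3/2}$ and $\epsilon^2$ as prefactors, and the remaining task is to show that the residual expectations $\E(e^{t\sqrt{\epsilon}|Z_1|}|Z_1|^k)$ stay bounded (indeed converge to a finite constant) as $\epsilon \to 0$.

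First I would make the substitution $Z = \sqrt{\epsilon}\,Z_1$ to write
\begin{align*}
	\E(e^{t|Z|}|Z|^k) = \epsilon^{k/2}\,\E\!\left(e^{t\sqrt{\epsilon}|Z_1|}|Z_1|^k\right),
\end{align*}
for $k = 3, 4$. It then remains to control the factor $\E(e^{t\sqrt{\epsilon}|Z_1|}|Z_1|^k)$. Since $t\sqrt{\epsilon} \to 0$, for all $\epsilon$ small enough we have $t\sqrt{\epsilon} \leq 1$, say, and using the pointwise bound $e^{t\sqrt{\epsilon}|Z_1|} \leq e^{|Z_1|}$ on this range, I would dominate the integrand by the $\epsilon$-independent quantity $e^{|Z_1|}|Z_1|^k$. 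Because the standard Gaussian has a finite moment generating function, $\E(e^{|Z_1|}|Z_1|^k) < \infty$ for every $k$ (the Gaussian density decays faster than any exponential grows), so this expectation is a finite constant. Hence $\E(e^{t\sqrt{\epsilon}|Z_1|}|Z_1|^k) = \mathcal{O}(1)$, and multiplying by the prefactor $\epsilon^{k/2}$ gives $\mathcal{O}(\epsilon^{3/2})$ for $k=3$ and $\mathcal{O}(\epsilon^2)$ for $k=4$, as claimed.

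Alternatively, and perhaps more cleanly, one can compute the relevant expectation in closed form using the moment generating function of the half-normal (folded normal) distribution, which already appears in the proof of Proposition \ref{prop:M1M2}. Writing out $\E(e^{s|Z_1|}|Z_1|^k)$ as an integral against the folded-normal density, one obtains an explicit expression in terms of $s$ (a polynomial in $s$ times $e^{s^2/2}$ plus error-function terms); evaluating at $s = t\sqrt{\epsilon}$ and expanding in $\epsilon$ confirms the leading-order behaviour. I expect the dominated-convergence argument to be the most economical route and the one I would write up.

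There is essentially no serious obstacle here: the only point requiring a moment of care is ensuring the domination is uniform in $\epsilon$, which is why I restrict attention to $\epsilon$ small enough that $t\sqrt{\epsilon} \leq 1$ (the finitely many larger $\epsilon$ are irrelevant for an $\epsilon \to 0$ asymptotic statement, and in any case each gives a finite expectation). The finiteness $\E(e^{|Z_1|}|Z_1|^k) < \infty$ is the crux, and it follows immediately from the Gaussian tail decay dominating the exponential factor. Thus the lemma reduces to a scaling identity combined with a uniform integrability bound, and no delicate estimate is needed.
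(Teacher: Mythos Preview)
Your argument is correct and is considerably cleaner than the paper's own proof. The paper proceeds by brute force: it writes $h(t) := \E(e^{t|Z|})$ explicitly in terms of $\Phi$ and $\phi_1$, then differentiates $h$ four times with respect to $t$ (using $\partial_t^k h(t) = \E(e^{t|Z|}|Z|^k)$), and finally reads off the leading powers of $\epsilon$ from the resulting expressions term by term. Your approach instead exploits the scaling identity $Z \overset{d}{=} \sqrt{\epsilon}\,Z_1$ to pull out the factor $\epsilon^{k/2}$ directly, reducing the problem to a single uniform bound $\E(e^{t\sqrt{\epsilon}|Z_1|}|Z_1|^k) \leq \E(e^{|Z_1|}|Z_1|^k) < \infty$ valid once $t\sqrt{\epsilon} \leq 1$. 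This avoids all the explicit differentiation and bookkeeping. The paper's route does yield closed-form expressions for each moment (which could in principle give sharper constants or higher-order terms), but for the stated $\mathcal{O}$-bounds your scaling-plus-domination argument is both shorter and more transparent. One very minor stylistic point: the case $t \leq 0$ is trivially covered since then $e^{t\sqrt{\epsilon}|Z_1|} \leq 1$, so the restriction to $t\sqrt{\epsilon} \leq 1$ is really only needed for $t > 0$; you might mention this explicitly for completeness.
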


With Lemma \ref{lem:sshat} and \ref{lem:foldednormal}, we prove the following estimates on the drift and volatility terms of $M_2^{\epsilon}$ as $\epsilon \to 0$:
\begin{lemma}\label{lem:estimatedrift}
	For $1 \leq i \leq d^*$, as $\epsilon \to 0$, 
	\begin{align}
		(1/\epsilon) \int (y_i - x_i) M_2^{\epsilon}(\*x,\*y)\, d\*y &= -\partial_{x_i}U(\*x)/(2d^*) + \mathcal{O}(\epsilon^{1/2}), \label{eq:drift}\\
		(1/\epsilon) \int (y_i - x_i)^2 M_2^{\epsilon}(\*x,\*y)\, d\*y &= 1/d^* + \mathcal{O}(\epsilon^{1/2}), \label{eq:volatility}\\
		(1/\epsilon) \int (y_i - x_i)^3 M_2^{\epsilon}(\*x,\*y)\, d\*y &= \mathcal{O}(\epsilon^{1/2}), \label{eq:third}
	\end{align}
	where the convergence are all uniform in $\mathbf{x} \in \mathbb{R}^{d^*}$.
\end{lemma}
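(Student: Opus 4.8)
The plan is to reduce each of the three integrals to a one-dimensional Gaussian expectation, linearize $s_{M_2}$ to $\hat{s}_{M_2}$ with Lemmas \ref{lem:sshat} and \ref{lem:foldednormal} controlling the error, and then evaluate the surviving main term by a short Taylor expansion of the exponential whose remainder is again absorbed by Lemma \ref{lem:foldednormal}. First I would exploit the product-of-deltas structure of $M_2^{\epsilon}$. Integrating $(y_i-x_i)^k$ with $k\geq 1$ against the $j$-th summand for $j\neq i$ forces $y_i=x_i$ through $\delta(y_i-x_i)$ and hence vanishes, so only the $j=i$ term survives; writing $Z\sim N(0,\epsilon)$ and substituting $z=y_i-x_i$ (the remaining deltas integrate to $1$), each quantity becomes $(1/\epsilon)\int (y_i-x_i)^k M_2^{\epsilon}(\*x,\*y)\,d\*y = \tfrac{1}{d^*\epsilon}\,\E\big[Z^k\, s_{M_2}(i,\*x,x_i+Z)\big]$.

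Next I would replace $s_{M_2}$ by $\hat{s}_{M_2}$. By the consequence of Lemma \ref{lem:sshat}, the resulting error is bounded by $\tfrac{c_1}{d^*\epsilon}\,\E[\,|Z|^{k+2}e^{M|Z|}\,]$, and Lemma \ref{lem:foldednormal} gives $\E[\,|Z|^{3}e^{M|Z|}\,]=\mathcal{O}(\epsilon^{3/2})$ and $\E[\,|Z|^{4}e^{M|Z|}\,]=\mathcal{O}(\epsilon^{2})$. Thus the substitution costs $\mathcal{O}(\epsilon^{1/2})$ for $k=1$ and $\mathcal{O}(\epsilon)$ for $k=2$, uniformly in $\*x$ since $|\partial_{x_i}U|\leq M$. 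For the third moment \eqref{eq:third} I would skip the linearization altogether and use the crude pointwise bound $s_{M_2}(i,\*x,x_i+Z)\leq e^{M|Z|}$ established in the proof of Proposition \ref{prop:M1M2}, so that $\tfrac{1}{d^*\epsilon}\,\E[\,|Z|^{3}e^{M|Z|}\,]=\mathcal{O}(\epsilon^{1/2})$ follows immediately.

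It then remains to evaluate $\tfrac{1}{d^*\epsilon}\,\E[Z^k\,\hat{s}_{M_2}(i,\*x,x_i+Z)]$ for $k=1,2$. Writing $a:=\partial_{x_i}U(\*x)$, a direct computation gives $\hat{s}_{M_2}(i,\*x,x_i+Z)=e^{(-aZ)_+}$, and I would use the first-order expansion $e^{(-aZ)_+}=1+(-aZ)_+ + R(Z)$ with remainder obeying $|R(Z)|\leq \tfrac12 a^2 Z^2 e^{M|Z|}$. The key elementary identity, valid for every sign of $a$ by splitting on $\{Z<0\}$ and $\{Z>0\}$ and using $\E[Z^2\mathbf{1}_{\{Z<0\}}]=\epsilon/2$, is $\E[Z(-aZ)_+]=-a\epsilon/2$. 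Combining this with $\E[Z]=0$ and $\E[Z R(Z)]=\mathcal{O}(\epsilon^{3/2})$ yields the drift $-a/(2d^*)+\mathcal{O}(\epsilon^{1/2})$, which is \eqref{eq:drift}. For $k=2$, $\E[Z^2]=\epsilon$, while $\E[Z^2(-aZ)_+]=\mathcal{O}(\epsilon^{3/2})$ is an odd truncated third moment and $\E[Z^2 R(Z)]=\mathcal{O}(\epsilon^2)$, so the volatility equals $1/d^*+\mathcal{O}(\epsilon^{1/2})$, which is \eqref{eq:volatility}.

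The main subtlety I anticipate is twofold: maintaining uniformity in $\*x$ and coping with the nondifferentiable positive-part nonlinearity $(\cdot)_+$. Uniformity is secured at every step by the bound $\sup_{\*x}|\partial_{x_i}U(\*x)|\leq M$ from Assumption \ref{assumpt:U}, which renders all the $\mathcal{O}$-constants (including the coefficient $a$ appearing in the Gaussian moments) independent of $\*x$. The $(\cdot)_+$ term is handled not by differentiating but through the explicit first-order expansion above, whose remainder is pointwise dominated by $\tfrac12 a^2 Z^2 e^{M|Z|}$ and therefore falls squarely within the scope of Lemma \ref{lem:foldednormal}; this lets me avoid any delicate analysis of the kink at $Z=0$ and reduces the entire computation to the truncated Gaussian moments recorded above.
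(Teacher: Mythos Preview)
Your argument is correct. The reduction to a one-dimensional Gaussian expectation, the replacement of $s_{M_2}$ by $\hat{s}_{M_2}$ via Lemmas~\ref{lem:sshat} and~\ref{lem:foldednormal}, and the direct bound for the third moment \eqref{eq:third} all match the paper exactly. Where you depart from the paper is in the evaluation of the main term $\tfrac{1}{d^*\epsilon}\E[Z^k\hat{s}_{M_2}(i,\*x,x_i+Z)]$ for $k=1,2$. The paper rescales to a standard normal, splits on the sign of $\partial_{x_i}U(\*x)$, completes the square in the exponent to turn $e^{-az\epsilon^{1/2}}\phi_1(z)$ into a shifted Gaussian density, and then tracks the resulting integrals through a further change of variables. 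You instead Taylor-expand $e^{(-aZ)_+}=1+(-aZ)_+ + R(Z)$ and reduce everything to the single identity $\E[Z(-aZ)_+]=-a\epsilon/2$ together with Lemma~\ref{lem:foldednormal} bounds on the remainder. This is shorter, handles all signs of $a=\partial_{x_i}U(\*x)$ at once, and makes uniformity in $\*x$ transparent since every estimate depends on $a$ only through $|a|\le M$. The paper's computation, on the other hand, is more explicit about the Gaussian structure and would generalize more readily if one wanted the next-order term in $\epsilon^{1/2}$.
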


We proceed to complete the proof of Theorem \ref{thm:main}. By Taylor expansion on $f \in C^{\infty}_c(\mathbb{R}^{d^*})$, there exists $\mathbf{z} = \alpha \mathbf{x} + (1-\alpha)\mathbf{y} $ such that
\begin{align*}
	(1/\epsilon)M_2^{\epsilon}f(\*x) &= (1/\epsilon) \int (f(\*y)- f(\*x)) M_2^{\epsilon}(\*x,\*y)\, d\*y \\
	&= (1/\epsilon) \int \left(\sum_{i=1}^{d^*} \partial_{x_i} f(\mathbf{x})(y_i-x_i) + \dfrac{1}{2} \sum_{i,j=1}^{d^*} \partial_{x_i} \partial_{x_j} f(\mathbf{x})(y_i-x_i)(y_j-x_j)\right)  M_2^{\epsilon}(\*x,\*y)\, d\*y \\
	&\quad + (1/\epsilon) \int \left(\dfrac{1}{6} \sum_{i,j,k=1}^{d^*} \partial_{x_i} \partial_{x_j} \partial_{x_k} f(\mathbf{z})(y_i-x_i)(y_j-x_j)(y_k-x_k)\right)  M_2^{\epsilon}(\*x,\*y)\, d\*y \\
	&= (1/\epsilon) \int \left(\sum_{i=1}^{d^*} \partial_{x_i} f(\mathbf{x})(y_i-x_i) + \dfrac{1}{2} \sum_{i=1}^{d^*} \partial^2_{x_i} f(\mathbf{x})(y_i-x_i)^2\right)  M_2^{\epsilon}(\*x,\*y)\, d\*y \\
	&\quad + (1/\epsilon) \int \left(\dfrac{1}{6} \sum_{i=1}^{d^*} \partial^3_{x_i} f(\mathbf{z})(y_i-x_i)^3\right)  M_2^{\epsilon}(\*x,\*y)\, d\*y \\
	&= \dfrac{1}{d^*} \sum_{i=1}^{d^*} \partial_{x_i} U(\*x)\partial_{x_i} f(\mathbf{x}) + \dfrac{1}{2d^*} \sum_{i=1}^{d^*} \partial^2_{x_i}f(\*x) + \mathcal{O}(\epsilon^{1/2}) \\
	&= Gf(\mathbf{x}) + \mathcal{O}(\epsilon^{1/2}),
\end{align*}
where the fourth equality follows from Lemma \ref{lem:estimatedrift} and the fact that $f$ has compact support. Note that the convergence is uniform in $\mathbf{x}$.

\subsubsection{Proof of Lemma \ref{lem:sshat}}\label{subsubsec:lem1}

	First, by the Taylor expansion on $U$ and the fact that $U$ is Lipschitz continuous by Assumption \ref{assumpt:U}, there exists constant $c_1$ such that
	$$|g(\*x,\*y)| \leq c_1 \| \*y - \*x \|^2.$$
	We would like to show the following inequality holds, by considering the possible signs of $U(\*x) - U(\*y)$ and $\langle \nabla U(\*x),\*x - \*y\rangle$:
	\begin{align}\label{eq:estimate1}
	1 - e^{\langle \nabla U(\*x),\*x-\*y\rangle_+ - (U(\*x) - U(\*y))_+} \leq 1 - e^{-|g(\*x,\*y)|} \leq |g(\*x,\*y)| \leq c_1 \| \*y - \*x \|^2.
	\end{align}
	\begin{itemize}
		\item Case 1: $U(\*x) - U(\*y) > 0$, $\langle \nabla U(\*x),\*x-\*y\rangle > 0$
		\newline
		In this case, since $- \left(U(\*y) - U(\*x) + \langle \nabla U(\*x),\*x-\*y\rangle\right) \leq |g(\*x,\*y)|$, upon rearranging we obtain the leftmost inequality of \eqref{eq:estimate1}.
		
		\item Case 2: $U(\*x) - U(\*y) \leq 0$, $\langle \nabla U(\*x),\*x-\*y\rangle \leq 0$
		\newline
		The leftmost inequality of \eqref{eq:estimate1} holds trivially.
		
		\item Case 3: $U(\*x) - U(\*y) > 0$, $\langle \nabla U(\*x),\*x-\*y\rangle \leq 0$
		\begin{align*}
		1 - e^{\langle \nabla U(\*x),\*x-\*y\rangle_+ - (U(\*x) - U(\*y))_+} = 1 - e^{U(\*y) - U(\*x)} \leq 1 - e^{U(\*y) - U(\*x) + \langle \nabla U(\*x),\*x - \*y \rangle} = 1 - e^{-|g(\*x,\*y)|}. 
		\end{align*}
		
		\item Case 4: $U(\*x) - U(\*y) \leq 0$, $\langle \nabla U(\*x),\*x-\*y\rangle > 0$
		\begin{align*}
		1 - e^{\langle \nabla U(\*x),\*x-\*y\rangle_+ - (U(\*x) - U(\*y))_+} = 1 - e^{\langle \nabla U(\*x),\*x-\*y\rangle} \leq 0 \leq 1 - e^{-|g(\*x,\*y)|}.
		\end{align*}
	\end{itemize}
	Similarly, we would like to show the following inequality holds, by considering the possible signs of $U(\*x) - U(\*y)$ and $\langle \nabla U(\*x),\*x - \*y\rangle$:
	\begin{align}\label{eq:estimate2}
	1 - e^{- \langle \nabla U(\*x),\*x-\*y\rangle_+ + (U(\*x) - U(\*y))_+} \leq 1 - e^{-|g(\*x,\*y)|} \leq |g(\*x,\*y)| \leq c_1 \| \*y - \*x \|^2.
	\end{align}
	\begin{itemize}
		\item Case 1: $U(\*x) - U(\*y) > 0$, $\langle \nabla U(\*x),\*x-\*y\rangle > 0$
		\begin{align*}
		1 - e^{- \langle \nabla U(\*x),\*x-\*y\rangle_+ + (U(\*x) - U(\*y))_+} = 1 - e^{g(\*x,\*y)} \leq 1 - e^{-|g(\*x,\*y)|}.
		\end{align*}
		
		\item Case 2: $U(\*x) - U(\*y) \leq 0$, $\langle \nabla U(\*x),\*x-\*y\rangle \leq 0$
		\newline
		The leftmost inequality of \eqref{eq:estimate2} holds trivially.
		
		\item Case 3: $U(\*x) - U(\*y) > 0$, $\langle \nabla U(\*x),\*x-\*y\rangle \leq 0$
		\begin{align*}
		1 - e^{- \langle \nabla U(\*x),\*x-\*y\rangle_+ + (U(\*x) - U(\*y))_+} = 1 - e^{U(\*x) - U(\*y)} \leq 1 - e^{U(\*y) - U(\*x) + \langle \nabla U(\*x),\*x - \*y \rangle} = 1 - e^{-|g(\*x,\*y)|}. 
		\end{align*}
		
		\item Case 4: $U(\*x) - U(\*y) \leq 0$, $\langle \nabla U(\*x),\*x-\*y\rangle > 0$
		\begin{align*}
		1 - e^{- \langle \nabla U(\*x),\*x-\*y\rangle_+ + (U(\*x) - U(\*y))_+} = 1 - e^{-\langle \nabla U(\*x),\*x-\*y\rangle} \leq 1 - e^{U(\*x) - U(\*y) - \langle \nabla U(\*x),\*x - \*y \rangle} = 1 - e^{g(\*x,\*y)} = 1 - e^{-|g(\*x,\*y)|}.
		\end{align*}
	\end{itemize}
	As a result, collecting both \eqref{eq:estimate1} and \eqref{eq:estimate2} we have 
	\begin{align*}
	s_{M_2}(\*x,\*y) - \hat{s}_{M_2}(\*x,\*y) &= e^{(U(\*x)-U(\*y))_+} \left(1 - e^{\langle \nabla U(\*x),\*x-\*y\rangle_+ - (U(\*x) - U(\*y))_+}\right) \leq c_1 e^{M \sum_{i=1}^{d^*}|y_i - x_i|} \|\*y - \*x\|^2, \\
	\hat{s}_{M_2}(\*x,\*y) - s_{M_2}(\*x,\*y) &= e^{\langle \nabla U(\*x),\*x-\*y\rangle_+} \left(1 - e^{- \langle \nabla U(\*x),\*x-\*y\rangle_+ + (U(\*x) - U(\*y))_+}\right) \leq c_1 e^{M \sum_{i=1}^{d^*}|y_i - x_i|} \|\*y - \*x\|^2,
	\end{align*}
	where the inequalities in the two equations above follow from mean value theorem and the fact that $\nabla U$ is bounded by Assumption \ref{assumpt:U}.

\subsubsection{Proof of Lemma \ref{lem:foldednormal}}\label{subsubsec:lem2}

	Let $h(t) := \E(e^{t|Z|})$ and we write $\Phi(\cdot)$ to be the cumulative distribution function of standard normal. We also denote $\phi^{(i)}_{\epsilon}$ to be the $i$-th derivative of $\phi_{\epsilon}$. By brute force differentiation and integration we note that
	\begin{align*}
	h(t) &= 2 e^{\frac{\epsilon t^2}{2}}(1-\Phi(-\sqrt{\epsilon}t)), \\
	\partial_t h(t) &= \E(e^{t|Z|}|Z|) = 2 \sqrt{\epsilon} e^{\frac{\epsilon t^2}{2}} \phi_1(-\sqrt{\epsilon}t) + 2 \epsilon t e^{\frac{\epsilon t^2}{2}} (1-\Phi(-\sqrt{\epsilon}t)),	 \\
	\partial^2_t h(t) &= \E(e^{t|Z|}|Z|^2) = -2\epsilon e^{\frac{\epsilon t^2}{2}} \phi^{(1)}_1(-\sqrt{\epsilon}t) + 4 \epsilon^{3/2} t e^{\frac{\epsilon t^2}{2}} \phi_1(-\sqrt{\epsilon}t) + 2\epsilon(1-\Phi(-\sqrt{\epsilon}t))(e^{\frac{\epsilon t^2}{2}} + \epsilon t^2 e^{\frac{\epsilon t^2}{2}}), \\
	\partial^3_t h(t) &= \E(e^{t|Z|}|Z|^3) = 2 \epsilon^{3/2} e^{\frac{\epsilon t^2}{2}} \phi_1^{(2)}(-\sqrt{\epsilon}t) - 6 \epsilon^2 t e^{\frac{\epsilon t^2}{2}} \phi_1^{(1)}(-\sqrt{\epsilon}t)  \\
	&\qquad \qquad \qquad + 6 \epsilon^{3/2} \phi_1(-\sqrt{\epsilon}t)(\epsilon t^2 e^{\frac{\epsilon t^2}{2}}  + e^{\frac{\epsilon t^2}{2}}) + 2\epsilon (1- \Phi(-\sqrt{\epsilon}t))(3 \epsilon t e^{\frac{\epsilon t^2}{2}} + \epsilon^2 t^3 e^{\frac{\epsilon t^2}{2}}), \\
	\partial^4_t h(t) &= \E(e^{t|Z|}|Z|^4) = -2 \epsilon^{3/2} e^{\frac{\epsilon t^2}{2}} \phi_1^{(3)}(-\sqrt{\epsilon}t) + 2 \epsilon^{3/2} e^{\frac{\epsilon t^2}{2}} \epsilon t \phi_1^{(2)}(-\sqrt{\epsilon}t)		\\
	&\qquad \qquad \qquad + 6 \epsilon^{5/2} t e^{\frac{\epsilon t^2}{2}} \phi_1^{(2)}(-\sqrt{\epsilon}t) - 6 \epsilon^{2} \phi_1^{(1)}(-\sqrt{\epsilon}t) (e^{\frac{\epsilon t^2}{2}} + \epsilon t^2 e^{\frac{\epsilon t^2}{2}})   \\
	&\qquad \qquad \qquad - 6 \epsilon^{2} \phi_1^{(1)}(-\sqrt{\epsilon}t)(\epsilon t^2 e^{\frac{\epsilon t^2}{2}} + e^{\frac{\epsilon t^2}{2}}) + 6 \epsilon^{3/2} \phi_1(-\sqrt{\epsilon}t)(\epsilon^2 t^3 e^{\frac{\epsilon t^2}{2}} + 3 \epsilon te^{\frac{\epsilon t^2}{2}}) \\
	&\qquad \qquad \qquad  + 2\epsilon^{3/2}\phi_1(-\sqrt{\epsilon}t) (3 \epsilon t e^{\frac{\epsilon t^2}{2}} + \epsilon^2 t^3 e^{\frac{\epsilon t^2}{2}}) + 2\epsilon (1- \Phi(-\sqrt{\epsilon}t))(6 \epsilon^2 t^2 e^{\frac{\epsilon t^2}{2}} + 3\epsilon e^{\frac{\epsilon t^2}{2}} + \epsilon^3 t^4 e^{\frac{\epsilon t^2}{2}}).
	\end{align*}
	Since $\phi_1(-\sqrt{\epsilon}t) = \mathcal{O}(1)$, $\phi_1^{(1)}(-\sqrt{\epsilon}t) = \mathcal{O}(\epsilon^{1/2})$, $\phi_1^{(2)}(-\sqrt{\epsilon}t) = \mathcal{O}(1)$ and $\phi_1^{(3)}(-\sqrt{\epsilon}t) = \mathcal{O}(\epsilon^{1/2})$ as $\epsilon \to 0$, we see that 
	\begin{align*}
	\partial^3_t h(t) &= \E(e^{t|Z|}|Z|^3) = \mathcal{O}(\epsilon^{3/2}), \\
	\partial^4_t h(t) &= \E(e^{t|Z|}|Z|^4) = \mathcal{O}(\epsilon^{2}).
	\end{align*}

\subsubsection{Proof of Lemma \ref{lem:estimatedrift}}\label{subsubsec:lem3}

	We first prove \eqref{eq:drift}. Note that
	\begin{align*}
	(1/\epsilon) \int (y_i - x_i) M_2^{\epsilon}(\*x,\*y)\, d\*y &= \dfrac{1}{d^* \epsilon} \int (y_i - x_i) s_{M_2}(i, \mathbf{x},y_i) \phi_{\epsilon}(y_i-x_i)\, dy_i \\ 
	&= \dfrac{1}{d^* \epsilon} \int (y_i - x_i) \hat{s}_{M_2}(i, \mathbf{x},y_i) \phi_{\epsilon}(y_i-x_i)\, dy_i \\
	&\quad + \dfrac{1}{d^* \epsilon} \int (y_i - x_i) (s_{M_2}(i, \mathbf{x},y_i) - \hat{s}_{M_2}(i, \mathbf{x},y_i)) \phi_{\epsilon}(y_i-x_i)\, dy_i \\
	&= \dfrac{1}{d^* \epsilon} \int (y_i - x_i) \hat{s}_{M_2}(i, \mathbf{x},y_i) \phi_{\epsilon}(y_i-x_i)\, dy_i + \mathcal{O}(\epsilon^{1/2}) \\
	&= \dfrac{1}{d^* \epsilon^{1/2}} \int_{\partial_{x_i}U(\mathbf{x}) z \leq 0} z e^{-\partial_{x_i}U(\mathbf{x}) z \epsilon^{1/2}} \phi_{1}(z)\, dz \\
	&\quad + \dfrac{1}{d^* \epsilon^{1/2}} \int_{\partial_{x_i}U(\mathbf{x}) z > 0} z  \phi_{1}(z)\, dz + \mathcal{O}(\epsilon^{1/2}),
	\end{align*}
	where the third equality follows from Lemma \ref{lem:sshat} and \ref{lem:foldednormal}. On $\{\mathbf{x};~ \partial_{x_i}U(\mathbf{x}) = 0\}$, clearly \eqref{eq:drift} holds uniformly. On $\{\mathbf{x};~ \partial_{x_i}U(\mathbf{x}) > 0\}$, the above equation becomes
	\begin{align*}
	(1/\epsilon) \int (y_i - x_i) M_2^{\epsilon}(\*x,\*y)\, d\*y &= \dfrac{1}{d^* \epsilon^{1/2}} \int_{z \leq 0} z e^{-\partial_{x_i}U(\mathbf{x}) z \epsilon^{1/2}} \phi_{1}(z)\, dz \\
	&\quad + \dfrac{1}{d^* \epsilon^{1/2}} \int_{z > 0} z  \phi_{1}(z)\, dz + \mathcal{O}(\epsilon^{1/2}) \\
	&= \dfrac{1}{d^* \epsilon^{1/2}} \int_{z \leq 0} z e^{ (\partial_{x_i}U(\mathbf{x}))^2\epsilon/2} \phi_{1}(z+ \partial_{x_i}U(\mathbf{x})\epsilon^{1/2})\, dz \\
	&\quad + \dfrac{1}{d^* \epsilon^{1/2}} \int_{z > 0} z  \phi_{1}(z)\, dz + \mathcal{O}(\epsilon^{1/2}) \\
	&= \dfrac{1}{d^* \epsilon^{1/2}} \int_{y \leq \partial_{x_i}U(\mathbf{x})\epsilon^{1/2}} (y-\partial_{x_i}U(\mathbf{x})\epsilon^{1/2}) e^{ (\partial_{x_i}U(\mathbf{x}))^2\epsilon/2} \phi_{1}(y)\, dy \\
	&\quad + \dfrac{1}{d^* \epsilon^{1/2}} \int_{z > 0} z  \phi_{1}(z)\, dz + \mathcal{O}(\epsilon^{1/2}) \\
	&= \dfrac{1}{d^* \epsilon^{1/2}} \int_0^{\mathcal{O}(\epsilon^{1/2})} y \phi_1(y)\,dy - \dfrac{\partial_{x_i}U(\mathbf{x})}{d^*}\left(1/2 + \int_0^{\mathcal{O}(\epsilon^{1/2})} \phi_1(y)\,dy \right) + \mathcal{O}(\epsilon^{1/2}) \\ 
	&= -\partial_{x_i}U(\*x)/(2d^*) + \mathcal{O}(\epsilon^{1/2}),
	\end{align*}
	where we complete the square to obtain the second equality, and we use the fact that $\nabla U$ is bounded by Assumption \ref{assumpt:U} in the fourth equality. Similarly, we can show that \eqref{eq:drift} holds uniformly on  $\{\mathbf{x};~ \partial_{x_i}U(\mathbf{x}) < 0\}$, and hence for all $x \in \mathbb{R}^{d^*}$.
	
	Next, we prove \eqref{eq:volatility}. Note that
	\begin{align*}
	(1/\epsilon) \int (y_i - x_i)^2 M_2^{\epsilon}(\*x,\*y)\, d\*y &= \dfrac{1}{d^* \epsilon} \int (y_i - x_i)^2 s_{M_2}(i, \mathbf{x},y_i) \phi_{\epsilon}(y_i-x_i)\, dy_i \\ 
	&= \dfrac{1}{d^* \epsilon} \int (y_i - x_i)^2 \hat{s}_{M_2}(i, \mathbf{x},y_i) \phi_{\epsilon}(y_i-x_i)\, dy_i \\
	&\quad + \dfrac{1}{d^* \epsilon} \int (y_i - x_i)^2 (s_{M_2}(i, \mathbf{x},y_i) - \hat{s}_{M_2}(i, \mathbf{x},y_i)) \phi_{\epsilon}(y_i-x_i)\, dy_i \\
	&= \dfrac{1}{d^* \epsilon} \int (y_i - x_i)^2 \hat{s}_{M_2}(i, \mathbf{x},y_i) \phi_{\epsilon}(y_i-x_i)\, dy_i + \mathcal{O}(\epsilon) \\
	&= \dfrac{1}{d^*} \int_{\partial_{x_i}U(\mathbf{x}) z \leq 0} z^2 e^{-\partial_{x_i}U(\mathbf{x}) z \epsilon^{1/2}} \phi_{1}(z)\, dz \\
	&\quad + \dfrac{1}{d^* } \int_{\partial_{x_i}U(\mathbf{x}) z > 0} z^2  \phi_{1}(z)\, dz + \mathcal{O}(\epsilon),
	\end{align*}
	where the third equality follows from Lemma \ref{lem:sshat} and \ref{lem:foldednormal}. On $\{\mathbf{x};~ \partial_{x_i}U(\mathbf{x}) = 0\}$, clearly \eqref{eq:drift} holds uniformly. On $\{\mathbf{x};~ \partial_{x_i}U(\mathbf{x}) > 0\}$, the above equation becomes
	\begin{align*}
	(1/\epsilon) \int (y_i - x_i)^2 M_2^{\epsilon}(\*x,\*y)\, d\*y &= \dfrac{1}{d^*} \int_{z \leq 0} z^2 e^{-\partial_{x_i}U(\mathbf{x}) z \epsilon^{1/2}} \phi_{1}(z)\, dz \\
	&\quad + \dfrac{1}{d^*} \int_{z > 0} z^2  \phi_{1}(z)\, dz + \mathcal{O}(\epsilon) \\
	&= \dfrac{1}{d^*} \int_{z \leq 0} z^2 e^{ (\partial_{x_i}U(\mathbf{x}))^2\epsilon/2} \phi_{1}(z+ \partial_{x_i}U(\mathbf{x})\epsilon^{1/2})\, dz \\
	&\quad + \dfrac{1}{d^*} \int_{z > 0} z^2  \phi_{1}(z)\, dz + \mathcal{O}(\epsilon) \\
	&= \dfrac{1}{d^*} \int_{y \leq \partial_{x_i}U(\mathbf{x})\epsilon^{1/2}} (y-\partial_{x_i}U(\mathbf{x})\epsilon^{1/2})^2 e^{ (\partial_{x_i}U(\mathbf{x}))^2\epsilon/2} \phi_{1}(y)\, dy \\
	&\quad + \dfrac{1}{d^*} \int_{z > 0} z^2  \phi_{1}(z)\, dz + \mathcal{O}(\epsilon) \\
	&= \dfrac{1}{d^*} \left(1+\int_0^{\mathcal{O}(\epsilon^{1/2})} y^2 \phi_1(y)\,dy\right) + \mathcal{O}(\epsilon^{1/2}) \\ 
	&= 1/d^* + \mathcal{O}(\epsilon^{1/2}),
	\end{align*}
	where we complete the square to obtain the second equality, and we use the fact that $\nabla U$ is bounded by Assumption \ref{assumpt:U} in the fourth equality. Similarly, we can show that \eqref{eq:volatility} holds uniformly on  $\{\mathbf{x};~ \partial_{x_i}U(\mathbf{x}) < 0\}$, and hence for all $x \in \mathbb{R}^{d^*}$.
	
	Finally, we prove \eqref{eq:third}. Note that
	\begin{align*}
	(1/\epsilon) \int (y_i - x_i)^3 M_2^{\epsilon}(\*x,\*y)\, d\*y &= \dfrac{1}{d^* \epsilon} \int (y_i - x_i)^3 s_{M_2}(i, \mathbf{x},y_i) \phi_{\epsilon}(y_i-x_i)\, dy_i \\ 
	&\leq \dfrac{1}{d^* \epsilon} \int |y_i - x_i|^3 e^{M|y_i-x_i|} \phi_{\epsilon}(y_i-x_i)\, dy_i = \mathcal{O}(\epsilon^{1/2}), 
	\end{align*}
	where the inequality and the second equality follow from the mean value theorem on $U$ and Lemma \ref{lem:foldednormal}.

\subsection{Proof of Corollary \ref{cor:main}}\label{subsec:proofcor}

Similar to the proof of Theorem \ref{thm:main}, it suffices for us to prove that
\begin{align*}
\sup_{\*x \in \mathbb{R}^{d^*}} |(1/\epsilon)(\alpha M_1^{\epsilon} + (1-\alpha)M_2^{\epsilon})f(\*x) - Gf(\*x)| &\leq \sup_{\*x \in \mathbb{R}^{d^*}} \alpha|(1/\epsilon) M_1^{\epsilon}f(\*x) - Gf(\*x)| \\
&\quad + \sup_{\*x \in \mathbb{R}^{d^*}} (1-\alpha)|(1/\epsilon)M_2^{\epsilon}f(\*x) - Gf(\*x)| \\
&\to 0.
\end{align*}

\noindent \textbf{Acknowledgements}.
We would like to thank Konstantin Avrachenkov, Jim Dai, Xuefeng Gao, Lu-Jing Huang, Aaron Smith and Jure Vogrinc for constructive discussions related to this work. Michael Choi acknowledges the support from The Chinese University of Hong Kong, Shenzhen grant PF01001143. 

\bibliographystyle{abbrvnat}
\bibliography{thesis}

\end{document}